\numberwithin{equation}{section}
\newtheorem{thm}{Theorem}[section]
\newtheorem{lma}[thm]{Lemma}
\newtheorem{cor}[thm]{Corollary}
\newtheorem{defn}[thm]{Definition}
\newtheorem{prop}[thm]{Proposition}
\renewcommand{\geq}{\geqslant}
\renewcommand{\leq}{\leqslant}
\title{ \vspace{-20mm}Arithmetic patches, weak tangents, and dimension}
\author{Jonathan M. Fraser \& Han Yu}
\begin{document}

\date{}

\maketitle

\begin{abstract}
We investigate the relationships between several classical notions in arithmetic combinatorics and geometry including: the presence (or lack of) arithmetic progressions (or patches in dimensions $\geq 2$); the structure of tangent sets; and the Assouad dimension.

We begin by extending a recent result of Dyatlov and Zahl by showing that a set cannot contain arbitrarily large arithmetic progressions (patches) if it has Assouad dimension strictly smaller than the ambient spatial dimension.  Seeking a partial converse, we go on to prove that having Assouad dimension equal to the ambient spatial dimension is equivalent to having weak tangents with non-empty interior and to `asymptotically' containing arbitrarily large arithmetic patches.

We present some applications of our results concerning sets of integers, which include a weak solution to the Erd\"os-Tur\'an conjecture on arithmetic progressions.
\\ \\ 
\emph{Mathematics Subject Classification} 2010: primary: 11B25, 28A80; secondary: 11N13.
\\
\emph{Key words and phrases}: arithmetic progression, arithmetic patch, weak tangent, Assouad dimension, conformal Assouad dimension, Szemer\'edi's Theorem, Erd\"os-Tur\'an conjecture, Steinhaus property.
\end{abstract}

\section{Introduction}

\subsection{Arithmetic patches and progressions}

An \emph{arithmetic progression} is a finite subset of $\mathbb{R}$ of the form
\[
P = \left\{ t +   \delta   x : x =0, \dots, k-1   \right\}
\]
for some $t, \delta >0$ and some $k \in \mathbb{N}$.  Here we say $P$ is an arithmetic progression of length $k$ and gap length $\delta$.  Finding arithmetic progressions inside subsets of $\mathbb{R}$ is a topic of great interest in additive combinatorics, number theory and geometry and, in particular, giving conditions which either guarantee the existence of arithmetic progressions or forbid them has attracted much attention.  In the discrete setting, Szemer\'edi's celebrated theorem states that any subset of the natural numbers with positive density necessarily contains arbitrarily long arithmetic progressions. The conclusion of Szemer\'edi's theorem is known to hold for some sets with zero density, however, such as the primes.  This is the content of the Green-Tao Theorem, see \cite{tao}.   In the continuous setting {\L}aba and Pramanik \cite[Theorem 1.2]{laba} showed that a set contains an arithmetic progression of length 3 if it supports a regular measure with certain Fourier decay properties.  Chan, {\L}aba and Pramanik also considered some higher dimensional analogues in \cite{chan}.   In the negative direction, Shmerkin \cite{shmerkin} provided examples of compact Salem sets of any dimension $s \in [0,1]$ which do not contain any arithmetic progressions of length 3, which is in stark contrast to the discrete setting.

The starting point for this work, however, was a recent and elegant observation of Dyatlov and Zahl, which states that any Ahlfors-David regular set of dimension $s<1$ cannot contain arbitrarily long arithmetic progressions, \cite[Proposition 6.13]{zahl}.  Recall that an Ahfors-David regular set of dimension $s$ is a set $F$ such that there exists a constant $C\geq 1$ such that for all $x \in F$ and sufficiently small $r>0$ we have
\[
C^{-1} r^s  \leq \mathcal{H}^s(B(x,r) \cap F)  \leq C r^s
\]
where $\mathcal{H}^s$ is the $s$-dimensional Hausdorff measure.  If a set is Ahfors-David regular of dimension $s$, then most of the familiar notions of dimension used to describe fractal sets coincide and equal $s$. In particular, the Hausdorff, packing, upper box,  lower box, lower, and Assouad dimensions are all necessarily equal to $s$.  For a review of dimension theory and the basic relationships between these dimensions, see \cite{falconer, mattila, Robinson}.  In particular, the Assouad dimension is at least as big as any of the other dimensions listed above.

Our first result, Theorem \ref{main1},  refines the observation of Dyatlov and Zahl by showing that if the Assouad dimension of a set $F \subseteq \mathbb{R}$  is strictly less than 1, then it cannot contain arbitrarily long arithmetic progressions.   This is sharp in the sense that the Assouad dimension cannot generally be replaced by any of the smaller dimensions mentioned above.  We construct examples to show that the converse of this result is not true, but if one relaxes the definition of arithmetic progressions slightly, then one can obtain an `if and only if' statement.  

Our results also hold in arbitrary finite dimensional real Banach spaces, where the notion of arithmetic progression will be replaced by the appropriate analogue: the arithmetic patch.  Let $X$ be a finite dimensional real Banach space with basis $\mathbf{e} = \{e_1, \dots, e_d\}$ for some $d \in \mathbb{N}$.  We denote the associated norm by $\| \cdot \|$ and use the norm to induce a metric, a Borel topology, and Lebesgue measure on $X$, all in the natural way.  For $k \in \mathbb{N}$ and $\delta>0$ we say that a set $P \subset X$ is an \emph{arithmetic patch} of size $k$ and scale $\delta$ (with respect to the basis $\mathbf{e}$) if
\[
P = \left\{ t \, +  \,  \delta  \,  \sum_{i=1}^d  x_i e_i  \ : \ x_1  = 0, \dots, k-1; \dots ; x_d  = 0, \dots, k-1    \right\}
\]
for some $t \in X$.  In particular, $P$ is a discrete set of cardinality $|P| = k^d$ and, in $\mathbb{R}$,  arithmetic patches of size $k$ and scale $\delta$ are precisely the arithmetic progressions of length $k$ and gap length $\delta$.

We say that a set $F \subseteq X$ \emph{contains arbitrarily large arithmetic patches} (with respect to the basis $\mathbf{e}$)  if for all $k \in \mathbb{N}$ there exists a $\delta = \delta(k)$ and an arithmetic patch $P = P(k, \delta)$  of size $k$ and scale $\delta$ (with respect to the basis $\mathbf{e}$) such that $P \subseteq X$.

Since arithmetic patches are finite sets,  containing arbitrarily large arithmetic patches is a (strictly) weaker property than the Steinhaus property, which says that a set contains a scaled copy of every finite configuration of points.  More precisely, we say that a set $F \subseteq X$ satisfies the \emph{Steinhaus property} if for any finite set $P \subseteq X$, there exists $\delta>0$ and $t \in X$ such that $t+\delta P \subseteq F$.  It is a consequence of the Lebesgue density theorem that any Lebesgue measurable set in $\mathbb{R}^n$ with positive Lebesgue measure satisfies the Steinhaus property.

\subsection{Weak tangents}

Weak tangents are tools for capturing the local structure of a  metric space. First we need a suitable notion of convergence for compact sets, which will be given by the Hausdorff metric.  Let $\mathcal{K}(X)$ denote the set of all compact subsets of  $X$, which is a complete metric space when equipped with the Hausdorff metric $d_\mathcal{H}$ defined by
\[
d_\mathcal{H} (A,B) = \inf \{ \delta  \ : \   A \subseteq B_\delta \text{ and } B \subseteq A_\delta \}
\]
where, for any $C \in \mathcal{K}(X)$,
\[
C_\delta \ = \ \{ x \in X \ : \ \| x- y \| < \delta \text{ for some } y \in C \}
\]
denotes the open $\delta$-neighbourhood of $C$.  We will write $B(0,1) \subset X$ to denote the closed unit ball.

\begin{defn}\label{weaktangentdef}
Let $F \in \mathcal{K}(X)$  and $E$ be a compact subset of the unit ball.  Suppose there exists a sequence of similarity maps $T_k: X \to X$ such that $d_\mathcal{H} (E,T_k(F) \cap B(0,1) ) \to 0$ as $k \to \infty$.  Then $E$   is called a \emph{weak tangent} to $F$.
\end{defn}

Recall that a similarity map on a metric space is a bi-Lipschitz map from the space to itself where the upper and lower Lipschitz constants are equal.  In particular, there is a fixed positive and finite  constant depending only on the map such that the map scales all distances uniformly by this constant.

\subsection{Dimension}

The Assouad dimension will be our key notion of metric dimension.  We recall the definition of the Assouad dimension here, but refer the reader to \cite{Robinson, Fraser, Luukkainen} for more details.  For any non-empty totally bounded set $E \subset X$ and $r>0$, let $N_r (E)$ be the smallest number of open sets with diameter less than or equal to $r$ required to cover $E$.  The \emph{Assouad dimension} of a non-empty set $F\subset X$ is defined by
\begin{eqnarray*}
\dim_\text{A} F & = &  \inf \Bigg\{ \  s \geq 0 \  : \ \text{      $ (\exists \, C>0)$ $(\forall \, R>0)$ $(\forall \, r \in (0,R) )$  } \\ 
&\,& \hspace{35mm} \text{ $\sup_{x \in F}N_r\big( B(x,R) \cap F \big) \ \leq \ C \bigg(\frac{R}{r}\bigg)^s$ } \Bigg\}
\end{eqnarray*}
where $B(x,R)$ denotes the closed ball centred at $x$ with radius $R$. It is well-known that the Assouad dimension is always an upper bound for the Hausdorff dimension and upper box dimension, for example.

It is an important problem in quasi-conformal geometry to study how much dimension can be lowered by taking the image of a given set under a  quasi-symmetric mapping.  As such, the \emph{conformal Assouad dimension} of a non-empty set $F\subset X$ is defined by
\[
\mathcal{C} \hspace{-1mm} \dim_\mathrm{A} F \ = \ \inf \{ \dim_\mathrm{A} \phi(F)  :  \phi \text { is quasi-symmetric} \}.
\]
In general, computing, or even effectively estimating, the conformal dimension is  challenging problem.  See \cite{mackaytyson} for more details on this topic and for the precise definition of a quasi-symmetric mapping.  Roughly speaking, a quasi-symmetric mapping is one for which there is control on the relative distortion of two sets.  This is much weaker than than bi-Lipschitz maps for which there is uniform  control on the distortion of the whole space.

One of the most effective ways to bound the Assouad dimension and conformal Assouad dimension of a set from below is to use the weak tangents considered in the previous section. This approach was  pioneered by Mackay and Tyson \cite{mackaytyson}. 

\begin{prop}{ \em \cite[Proposition 6.1.5]{mackaytyson}.}\label{weaktangent}
Let $F, E \in \mathcal{K}(X)$ and suppose $E$ is a weak tangent to $F$.   Then $\dim_\mathrm{A} F \geq \dim_\mathrm{A} E$ and $\mathcal{C} \hspace{-1mm} \dim_\mathrm{A} F \geq \mathcal{C} \hspace{-1mm} \dim_\mathrm{A} E$. 
\end{prop}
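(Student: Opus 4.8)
The plan is to obtain both inequalities from a single core fact: the Assouad dimension never increases when one passes from $F$ to a weak tangent. The first inequality is then exactly this fact, while the second reduces to it after observing that any quasi-symmetric image of $F$ has a weak tangent containing a quasi-symmetric copy of $E$.

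First I would prove that $\dim_{\mathrm{A}} E \le \dim_{\mathrm{A}} F$, which rests on two points. The Assouad dimension is invariant under similarities: if $T$ multiplies all distances by $\lambda>0$ then $N_r(B(x,R)\cap T(F)) = N_{r/\lambda}(B(T^{-1}x,R/\lambda)\cap F)$, so if the defining inequality for the Assouad dimension holds for $F$ with some constant $C$ and exponent $t$, it holds for every $T_k(F)$ with the \emph{same} $C$ and $t$. Fixing $t > \dim_{\mathrm{A}} F$, we thus have $C>0$ with $N_\rho(B(y,R')\cap T_k(F)) \le C(R'/\rho)^t$ for all $k$, all $y\in T_k(F)$ and all $0<\rho<R'$. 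Secondly, covering numbers are stable under Hausdorff perturbations: given $x\in E$ and $0<r<R$, pick $\varepsilon\in(0,r/2)$ and then $k$ with $d_{\mathcal H}(E,T_k(F)\cap B(0,1))<\varepsilon$, and choose $y\in T_k(F)$ with $\|x-y\|<\varepsilon$. Every point of $B(x,R)\cap E$ lies within $\varepsilon$ of $B(y,R+2\varepsilon)\cap T_k(F)$, so covering the latter by sets of diameter $\le r-2\varepsilon$ and enlarging each by its open $\varepsilon$-neighbourhood covers $B(x,R)\cap E$ by sets of diameter $\le r$. Hence
\[
N_r\big(B(x,R)\cap E\big)\ \le\ N_{r-2\varepsilon}\big(B(y,R+2\varepsilon)\cap T_k(F)\big)\ \le\ C\left(\frac{R+2\varepsilon}{r-2\varepsilon}\right)^{t}.
\]
The left-hand side is independent of $\varepsilon$, so letting $\varepsilon\to 0^+$ gives $N_r(B(x,R)\cap E)\le C(R/r)^t$ for all $x\in E$ and $0<r<R$; thus $\dim_{\mathrm{A}} E\le t$, and since $t>\dim_{\mathrm{A}} F$ was arbitrary, $\dim_{\mathrm{A}} E\le\dim_{\mathrm{A}} F$.

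For the conformal inequality it suffices, since $\mathcal{C}\dim_{\mathrm{A}} F$ is an infimum over quasi-symmetric $\phi$, to show $\dim_{\mathrm{A}}\phi(F)\ge\mathcal{C}\dim_{\mathrm{A}} E$ for each such $\phi$. I would produce a weak tangent $E_\phi$ of $\phi(F)$ and a quasi-symmetric map $\psi$ with $\psi(E)\subseteq E_\phi$; then monotonicity of $\dim_{\mathrm{A}}$ under inclusion, the first part of the proposition applied to $\phi(F)$, and the definition of $\mathcal{C}\dim_{\mathrm{A}} E$ yield
\[
\dim_{\mathrm{A}}\phi(F)\ \ge\ \dim_{\mathrm{A}} E_\phi\ \ge\ \dim_{\mathrm{A}}\psi(E)\ \ge\ \mathcal{C}\dim_{\mathrm{A}} E.
\]
To build $E_\phi$ and $\psi$ from the similarities $T_k$ with $T_k(F)\cap B(0,1)\to E$: if the scaling ratios of the $T_k$ remain bounded then, along a subsequence, $E$ is a similar copy of a subset of $F$ and the conclusion is immediate (for both dimensions), so assume the ratios tend to infinity. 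Quasi-symmetry of $\phi$ ensures that, after post-composing with suitable similarities $S_k$ on the target, the maps $\psi_k:=S_k\circ\phi\circ T_k^{-1}$ restricted to $T_k(F)\cap B(0,1)$ are equi-quasi-symmetric with images lying in a common bounded set. An Arzel\`a--Ascoli argument (equi-quasi-symmetry forces equicontinuity on the converging domains) then gives, along a subsequence, a quasi-symmetric limit $\psi$ on $E$, and since $S_k(\phi(F))\supseteq\psi_k(T_k(F)\cap B(0,1))$ a further subsequence makes $S_k(\phi(F))\cap B(0,1)$ Hausdorff-converge to a weak tangent $E_\phi$ of $\phi(F)$ with $\psi(E)\subseteq E_\phi$.

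The routine ingredients are the similarity-invariance of $\dim_{\mathrm{A}}$ and the $\varepsilon$-fattening estimate. I expect the main obstacle to be the construction in the last paragraph: checking that a single quasi-symmetric $\phi$, together with the scalings $S_k$, really yields an \emph{equi}-quasi-symmetric family $\{\psi_k\}$ (this uses the three-point form of the quasi-symmetry inequality and the uniform continuity of $\phi$ on the compact set $F$), and then running the compactness argument carefully enough — with the auxiliary similarities $S_k$ tracked correctly — that the limit set $E_\phi$ genuinely satisfies Definition \ref{weaktangentdef}. Once this scaffolding is in place, the equicontinuity and the Hausdorff convergence follow from standard estimates.
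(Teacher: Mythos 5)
The paper does not prove this proposition; it is stated as a citation of Mackay and Tyson (\cite[Proposition 6.1.5]{mackaytyson}), so there is no in-text proof to compare against. Evaluating your argument on its own terms: the first inequality $\dim_\mathrm{A} E \le \dim_\mathrm{A} F$ is handled correctly and completely --- similarity-invariance of the ratio $R/r$ plus the $\varepsilon$-fattening of covers is exactly the right (and standard) route, and the limit $\varepsilon\to 0^+$ cleanly eliminates the perturbation terms. For the conformal inequality you identify the correct reduction (for each quasi-symmetric $\phi$, manufacture a weak tangent $E_\phi$ of $\phi(F)$ receiving a quasi-symmetric image of $E$, then chain the three inequalities) and the correct tool (renormalise by similarities $S_k$ and appeal to compactness for $\eta$-quasi-symmetric families); this is precisely the Mackay--Tyson strategy and what their Proposition 6.1.5 rests on.

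Two places deserve more than a gesture, as you partly anticipate. First, the bounded-scaling-ratio case is dismissed too quickly: you must also control the translational parts of $T_k$, although compactness of $F\in\mathcal{K}(X)$ and nontriviality of $E$ make this routine. Second, and more substantively, the ``Arzel\`a--Ascoli'' step needs the non-degeneracy of the limit map $\psi$ to be secured: this is done by fixing two base points $a_k,b_k\in T_k(F)\cap B(0,1)$ with $\|a_k-b_k\|$ bounded below (possible because $E$ is not a singleton, and if $E$ is a singleton both inequalities are trivial), choosing $S_k$ so that $\psi_k(a_k)=0$ and $\|\psi_k(b_k)\|=1$, and then using the \emph{same} control function $\eta$ for every $\psi_k$ (pre- and post-composing a quasi-symmetric map with similarities does not change $\eta$ --- not a consequence of uniform continuity of $\phi$ on $F$, which is the wrong ingredient to cite here). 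With this normalisation the family is uniformly bounded and equicontinuous, the pointwise (diagonal) limit along the converging domains is again $\eta$-quasi-symmetric and non-constant, and $\psi_k(T_k(F)\cap B(0,1))\subseteq S_k(\phi(F))\cap B(0,M)$ for a fixed $M$ gives, after rescaling $B(0,M)$ to $B(0,1)$, a genuine weak tangent $E_\phi$ with $\psi(E)\subseteq E_\phi$. Once these details are filled in as indicated, the proof is correct and follows the cited source's approach.
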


\section{Results}

Our first result gives a necessary condition for a set to contain arbitrarily large arithmetic patches.

\begin{thm} \label{main1}
Let $F$ be a non-empty subset of a $d$-dimensional real Banach space $X$.  If $\dim_\mathrm{A} F < d$, then $F$ does not contain arbitrarily large arithmetic patches.
\end{thm}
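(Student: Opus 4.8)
The plan is to argue by contradiction: suppose $F$ contains arbitrarily large arithmetic patches. I will show that this forces a weak tangent of $F$ to be the full unit cube (with respect to the basis $\mathbf{e}$), hence to have Assouad dimension $d$, which by Proposition \ref{weaktangent} gives $\dim_\mathrm{A} F \geq d$, contradicting the hypothesis. Without loss of generality we may assume $F$ is closed (since $\dim_\mathrm{A} \overline{F} = \dim_\mathrm{A} F$ and passing to the closure only helps contain patches), and it suffices to produce a single suitable weak tangent $E$.

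First, for each $k \in \mathbb{N}$ let $P_k \subseteq F$ be an arithmetic patch of size $k$ and scale $\delta_k$, say $P_k = t_k + \delta_k \{0, \dots, k-1\}^d$ (identifying $\sum x_i e_i$ with the tuple $(x_1,\dots,x_d)$). The patch $P_k$ spans an axis-parallel cube of side length $(k-1)\delta_k$. The idea is to rescale so that this cube becomes comparable to the unit ball: define the similarity map $T_k(x) = \big(x - t_k\big)/\big((k-1)\delta_k\big)$, so that $T_k(P_k)$ is a uniformly spaced grid of $k^d$ points filling the cube $[0,1]^d$ with grid spacing $1/(k-1)$. Since $T_k(P_k) \subseteq T_k(F)$ and the grid spacing tends to $0$, the sets $T_k(F) \cap B(0,1)$ contain arbitrarily fine grids in (a fixed proportion of) the unit ball. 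Passing to a subsequence using compactness of $\mathcal{K}(B(0,1))$ under the Hausdorff metric, $T_k(F) \cap B(0,1)$ converges to some compact $E \subseteq B(0,1)$, and $E$ is a weak tangent to $F$ by Definition \ref{weaktangentdef}. Because every point of a fixed subcube is a Hausdorff-limit of grid points lying in $T_k(F)$, that subcube is contained in $E$; thus $E$ has non-empty interior, so $\dim_\mathrm{A} E = d$ and we are done.

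The main obstacle — and the only genuinely delicate point — is controlling the geometry when the norm $\|\cdot\|$ is not the Euclidean norm, so that balls need not be cubes and the axis-parallel cube spanned by $P_k$ need not sit nicely inside $B(0,1)$. One must choose the scaling constant in $T_k$ (a fixed multiple of $(k-1)\delta_k$, depending only on $\mathbf{e}$ and $\|\cdot\|$, via equivalence of norms on $X$) so that the rescaled cube contains a fixed ball $B(0,\rho)$ with $\rho>0$ independent of $k$; then $E \supseteq B(0,\rho/2)$, say. A second minor point is ensuring the Hausdorff limit actually contains this ball rather than just the lattice: this follows since any $y \in B(0,\rho)$ lies within $\sqrt{d}/(k-1)$ (in the chosen coordinates, up to norm-equivalence constants) of some rescaled patch point in $T_k(F)$, so $\mathrm{dist}(y, T_k(F)\cap B(0,1)) \to 0$. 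Both points are routine once the normalisation is set up correctly; no number-theoretic input is needed, only the Mackay–Tyson weak tangent bound and compactness of the Hausdorff metric.
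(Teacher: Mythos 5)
Your proposal is correct and follows essentially the same route as the paper: the paper derives Theorem \ref{main1} from the implication $1.\Rightarrow 5.$ in Theorem \ref{main2}, where one rescales the patches by a similarity $T_k$ (normalised, exactly as you note, so the convex hull of the patch sits inside $B(0,1)$), extracts a subsequential Hausdorff limit from the compact sets $T_k(F)\cap B(0,1)$, observes that this weak tangent contains a solid cube, and then applies Proposition \ref{weaktangent}. Your version handles the simpler non-asymptotic hypothesis directly, but the mechanism (rescale, compactness of $\mathcal{K}(B(0,1))$, weak tangent with interior, Mackay--Tyson) is the same.
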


In fact, Theorem \ref{main1}  will follow from the stronger Theorem \ref{main2} below.  This result is sharp in the sense that the Assouad dimension cannot be replaced by any of the other standard dimension functions.  In particular, consider the countable union of arithmetic patches
\[
E = \bigcup_{n \in \mathbb{N}} \left\{  \delta_n  \sum_{i=1}^d  x_i e_i  \ : \ x_1 =0, \dots, k-1; \dots; x_d =0, \dots, k-1   \right\}
\]
where $\delta_n \to 0$ very quickly (exponentially will do).  Then it is easy to see that the upper box dimension (which is the second largest of the standard dimensions) of $E$  is equal to $0$, despite the set containing arbitrarily large arithmetic patches.

 The converse of Theorem \ref{main1} does not hold, as the following example demonstrates. Let
\[
E_p = \{ 1/n^p : n \in \mathbb{N}\} \subseteq \mathbb{R}
\]
for $p >0$.  It is well-known that $\dim_\mathrm{A}E_p = 1$ for any $p>0$, see for example \cite{garcia, fraseryu}. However, $E_p$ contains no arithmetic progressions of length 3 if $p \geq 3$ is an integer.  Suppose to the contrary that there exists $a, b, c \in \mathbb{N}$ such that
\[
1/a^p-1/b^p = 1/b^p-1/c^p.
\]
It follows that $(bc)^3-2 (ac)^3 +(ab)^3 = 0$, which is not possible.  This was proved by Darmon and Merel \cite{darmon}, and was originally the content of Dene's Conjecture.  Arithmetic progressions of length 3 \emph{are} possible within the squares (and hence within the reciprocals of the squares), but arithmetic progressions of length 4  are not possible.  This last fact was proved by Euler; see \cite{ribet} for a lucid summary of such results.

Essentially, the above examples highlight that the presence of arithmetic patches is more rigid than having maximal dimension.  If we relax the condition slightly, then we can obtain a partial converse.

\begin{defn}\label{AAP}
We say that $F$  \emph{asymptotically contains arbitrarily large arithmetic patches} if, for all $k \in \mathbb{N}$ and $\varepsilon>0$, there exists $\delta>0$ and an arithmetic patch $P$  of size $k$ and scale $\delta$ and a set $E \subseteq F$ such that
\[
d_\mathcal{H} (E,P) \leq \varepsilon \delta.
\]
\end{defn}
In the above definition, $\varepsilon>0$ can be chosen to depend on $k$ if one so wishes.  We also consider an `asymptotic' or `approximate' version of the Steinhaus property. 

\begin{defn}\label{Astein}
We say that $F$ satisfies the  \emph{asymptotic Steinhaus property} if, for all finite sets $P \subset X$ and all $\varepsilon>0$, there exists $\delta>0$,  $t \in X$ and  $E \subseteq F$ such that
\[
d_\mathcal{H} (E, t+\delta P) \leq \varepsilon \delta.
\]
\end{defn}

This time $\varepsilon>0$ can be chosen to depend on $P$ if one so wishes.   Satisfying the asymptotic Steinhaus property clearly implies that a  set asymptotically contains arbitrarily large arithmetic patches, but we shall see that these two properties are actually equivalent.  We can now state our main result.

\begin{thm} \label{main2}
Let $F$ be a non-empty subset of a $d$-dimensional real Banach space $X$.  Then the following are equivalent:
\begin{enumerate}
\item $F$ asymptotically contains arbitrarily large arithmetic patches,
\item $F$ satisfies the  asymptotic Steinhaus property,
\item $F$ has maximal Assouad dimension, i.e. $\dim_\mathrm{A} F = d$,
\item $F$ has maximal conformal Assouad dimension, i.e. $\mathcal{C} \hspace{-1mm} \dim_\mathrm{A} F = d$,
\item $F$ has a weak tangent with non-empty interior,
\item $B(0,1)$ is a weak tangent to $F$.
\end{enumerate}
\end{thm}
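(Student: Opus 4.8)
The plan is to establish the equivalences by proving a cycle together with a few auxiliary implications: $(6) \Rightarrow (5) \Rightarrow (4) \Rightarrow (3)$ are either trivial or immediate from Proposition \ref{weaktangent} (noting $\mathcal{C}\hspace{-1mm}\dim_\mathrm{A} B(0,1) = \dim_\mathrm{A} B(0,1) = d$, and that the unit ball has non-empty interior so $(6)\Rightarrow(5)$); $(2)\Rightarrow(1)$ is trivial since an arithmetic patch is a particular finite set. This leaves three substantive implications to close the loop: $(3)\Rightarrow(6)$, $(1)\Rightarrow(3)$, and $(1)\Rightarrow(2)$. I would organise the argument around $(3)\Rightarrow(6)\Rightarrow(5)\Rightarrow(4)\Rightarrow(3)$ to pin down the dimension-theoretic equivalences, and then $(3)\Rightarrow(2)\Rightarrow(1)\Rightarrow(3)$ for the arithmetic ones, so $(3)\Rightarrow(6)$ and $(3)\Rightarrow(2)$ and $(1)\Rightarrow(3)$ are the real work.

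For $(3)\Rightarrow(6)$: assume $\dim_\mathrm{A} F = d$. The idea is that maximal Assouad dimension forces, at arbitrarily fine relative scales, configurations inside $F$ that are combinatorially as dense as Lebesgue measure permits near some point; rescaling these via similarities $T_k$ and passing to a subsequential Hausdorff limit (using compactness of $(\mathcal{K}(B(0,1)), d_\mathcal{H})$) should produce a weak tangent $E$. One then needs $E$ to have non-empty interior, and in fact to be all of $B(0,1)$. The cleanest route is probably to argue by contradiction: if no weak tangent equals $B(0,1)$ (equivalently, by a compactness/covering argument, if the "local density" of $F$ is bounded away from full at every scale and location), then one can cover $B(x,R)\cap F$ by boxes more efficiently than $(R/r)^d$ uniformly, forcing $\dim_\mathrm{A} F < d$ — contradiction. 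Making "efficiently" quantitative and uniform in $x$ and in the pair $r<R$ is where care is needed; a Vitali-type or dyadic-grid pigeonholing on which subcubes of a given cube meet $F$ should do it, with the key point that a single deficient scale, if it recurs uniformly, multiplies up to a dimension drop.

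For $(1)\Rightarrow(3)$: suppose $F$ asymptotically contains arbitrarily large patches but $\dim_\mathrm{A} F = s < d$. Fix $s < t < d$ and let $C$ be the Assouad constant for exponent $t$. Given $k$, choose $\varepsilon$ small (to be calibrated against $k$, $t$, $C$, $d$) and take the patch $P$ of size $k$, scale $\delta$, and $E\subseteq F$ with $d_\mathcal{H}(E,P)\le\varepsilon\delta$. Then $E$ lies in a ball of radius $R \asymp k\delta$, and $E$ must contain $\gtrsim k^d$ points that are $\gtrsim \delta$-separated (each near a distinct patch point, provided $\varepsilon < 1/2$). Hence $N_{c\delta}(B(x,R)\cap F) \gtrsim k^d$ for a suitable centre $x$ and constant $c$, while the Assouad bound gives $N_{c\delta}(B(x,R)\cap F) \le C (R/(c\delta))^t \asymp C k^t$. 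For $k$ large this is a contradiction since $t < d$. This is the easiest of the three.

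For $(3)\Rightarrow(2)$ (asymptotic Steinhaus): given $(6)$, $B(0,1)$ is a weak tangent, so there are similarities $T_k$ with $d_\mathcal{H}(T_k(F)\cap B(0,1), B(0,1)) \to 0$. Given a finite $P\subset X$ and $\varepsilon>0$: after a translation and scaling we may assume $P \subset B(0,1/2)$, say; for $k$ large, every point of $\tfrac12 B(0,1) \supseteq P$ lies within $\varepsilon'$ of $T_k(F)$, so we can pick $E'\subseteq T_k(F)\cap B(0,1)$ with one point $\varepsilon'$-close to each point of $P$, giving $d_\mathcal{H}(E', P)$ small relative to the (unit) scale; pulling back by $T_k^{-1}$ turns this into the required $E = T_k^{-1}(E') \subseteq F$ with $d_\mathcal{H}(E, t+\delta P)\le \varepsilon\delta$ where $\delta$ is the (small) contraction ratio of $T_k^{-1}$ and $t$ its translation part. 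The only subtlety is tracking how the Hausdorff distance scales under the similarity and choosing $\varepsilon'$ accordingly.

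The main obstacle I anticipate is $(3)\Rightarrow(6)$ — extracting a \emph{full} ball (not merely a set with non-empty interior) as a weak tangent from the purely metric hypothesis $\dim_\mathrm{A} F = d$, with all covering estimates uniform in the base point and in the scale ratio. I expect the heart of it to be a clean combinatorial lemma: if at some location and some pair of scales $F$ fails to meet a fixed fraction of the subcubes of a cube, then iterating this deficiency across scales yields $\dim_\mathrm{A} F < d$; the contrapositive then supplies, along a sequence of scales, near-complete cubes whose rescalings Hausdorff-converge to $B(0,1)$.
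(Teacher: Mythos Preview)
Your plan is correct and, for the two hardest implications, matches the paper's approach: the paper also proves $(3)\Rightarrow(6)$ by contradiction via iterated removal of empty dyadic sub-tiles (your ``a single deficient scale, if it recurs uniformly, multiplies up to a dimension drop'' is exactly the mechanism --- the paper makes this explicit by showing $M_{r_n}(Q_n) \le \delta(n)^{-d}(1-\varepsilon^d)^m$ with $m\asymp \log\delta(n)/\log\varepsilon$), and it proves $(6)\Rightarrow(2)$ essentially as you do, by placing a rescaled copy of the finite configuration $P$ inside a ball $B_n$ that $F$ fills up to error $|B_n|/n$.

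The one genuine difference is how you close the cycle through $(1)$. The paper proves $(1)\Rightarrow(5)$ directly: it rescales the approximate size-$k$ patches via similarities $T_k$ sending their convex hulls onto a fixed cube $C\subset B(0,1)$, extracts a Hausdorff-subsequential limit of $T_k(F)\cap B(0,1)$, and checks that this limit contains $C$. You instead prove $(1)\Rightarrow(3)$ by the elementary separation/counting argument (an approximate size-$k$ patch with $\varepsilon<\tfrac12\min_i\|e_i\|$ yields $k^d$ points of $F$ that are $\gtrsim\delta$-separated inside a ball of radius $\asymp k\delta$, contradicting any Assouad exponent $t<d$). Your route is a little shorter and avoids a second appeal to weak tangents; the paper's route has the incidental bonus of exhibiting an explicit cube inside some weak tangent, but for the stated equivalence either closes the loop.
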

 
The hardest part of proving this theorem is establishing the implication $3. \Rightarrow 6.$, which we will do in Section \ref{proof1}.  We will  prove that $6. \Rightarrow 2.$  in Section \ref{proof2}.  Recalling that $2. \Rightarrow 1.$ is trivial, we will prove that $1. \Rightarrow 5.$  in Section \ref{proof3}.  The implication $5. \Rightarrow 4.$ is given by Proposition \ref{weaktangent} and the basic fact that sets with interior points have full conformal Assouad dimension, see \cite{mackaytyson}.  Furthermore, the implication $4. \Rightarrow 3.$ follows immediately  from the definition of conformal dimension, which completes the proof.

Note that Theorem \ref{main1} follows immediately from Theorem \ref{main2} since `asymptotically containing arbitrarily large arithmetic patches' is a weaker property than `containing arbitrarily large arithmetic patches'.  Indeed the following strengthening of Theorem \ref{main1} follows immediately from Theorem \ref{main2}.

\begin{cor}
Let $F$ be a non-empty subset of a $d$-dimensional real Banach space $X$.  If $\dim_\mathrm{A} F < d$, then $F$ does not asymptotically contain arbitrarily large arithmetic patches.
\end{cor}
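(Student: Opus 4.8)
This Corollary is exactly the contrapositive of the implication $1 \Rightarrow 3$ inside Theorem \ref{main2}: since asymptotically containing arbitrarily large arithmetic patches is \emph{equivalent} to having $\dim_{\mathrm{A}} F = d$, any $F$ with $\dim_{\mathrm{A}} F < d$ fails that property, so once Theorem \ref{main2} is in hand there is nothing left to do. For completeness I would also record the following direct argument, which proves the contrapositive using only the covering definition of Assouad dimension and bypasses the rest of the equivalences.

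The plan is to assume that $F$ asymptotically contains arbitrarily large arithmetic patches and deduce $\dim_{\mathrm{A}} F = d$. First I would fix $k \in \mathbb{N}$ and record the two elementary geometric facts about patches that follow from the equivalence of norms on the $d$-dimensional space $X$: there are constants $0 < c_1 \le c_2$, depending only on $\| \cdot \|$ and on the basis $\mathbf{e}$, such that every arithmetic patch $P$ of size $k$ and scale $\delta$ has minimum inter-point distance at least $c_1 \delta$ and diameter at most $c_2 k \delta$. I would then apply Definition \ref{AAP} with this $k$ and with $\varepsilon$ chosen small relative to $c_1$ (say $\varepsilon = c_1/8$), obtaining a scale $\delta > 0$, such a patch $P$, and a set $E \subseteq F$ with $d_\mathcal{H}(E,P) \le \varepsilon \delta$.

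Next I would turn $E$ into covering data. Because $P$ lies in the $\varepsilon\delta$-neighbourhood of $E$, each of the $k^d$ points of $P$ has a point of $E$ within $\varepsilon\delta$; the triangle inequality makes these chosen points pairwise distinct and at least $(c_1\delta/2)$-separated, so $E$ contains $\ge k^d$ points that are pairwise $(c_1\delta/2)$-separated, while $\diam E \le c_2 k\delta + 2\varepsilon\delta \le c_3 k \delta$. Fixing any $x \in E \subseteq F$ and putting $R = c_3 k\delta$, $r = c_1\delta/4$, I get $E \subseteq B(x,R)$, any set of diameter at most $r$ meets at most one of the separated points, and hence $N_r\big(B(x,R)\cap F\big) \ge N_r(E) \ge k^d$ with $R/r = c_4 k$ for a constant $c_4$ independent of $k$ and $\delta$. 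Feeding this into the definition of $\dim_{\mathrm{A}} F$ shows that any $s > \dim_{\mathrm{A}} F$ must satisfy $k^d \le C c_4^{s} k^s$ for all large $k$, forcing $s \ge d$; combined with the trivial bound $\dim_{\mathrm{A}} F \le \dim_{\mathrm{A}} X = d$, this gives $\dim_{\mathrm{A}} F = d$, as required.

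I do not expect a genuine obstacle here. The only point needing care is the bookkeeping of the norm-equivalence constants $c_1, c_2, c_3, c_4$, which must be seen to depend on $X$ and $\mathbf{e}$ but not on $k$ or $\delta$, together with the harmless check that $r < R$ for large $k$ so that the Assouad estimate applies. The real difficulty of this circle of ideas sits in the hard implications of Theorem \ref{main2} — in particular $3 \Rightarrow 6$ — and not in this Corollary, which at bottom is a pigeonhole count.
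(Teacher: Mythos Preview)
Your first paragraph is exactly the paper's own proof: the Corollary is stated as following immediately from Theorem \ref{main2}, with no further argument given.

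Your second and third paragraphs add a correct and genuinely more direct route. In the paper, the implication $1.\Rightarrow 3.$ is obtained only indirectly, via $1.\Rightarrow 5.\Rightarrow 4.\Rightarrow 3.$: one first builds a weak tangent containing a solid cube (Section \ref{proof3}) and then invokes Proposition \ref{weaktangent} together with the fact that sets with interior have full conformal Assouad dimension. Your argument bypasses the weak-tangent machinery entirely and reads the lower bound $\dim_{\mathrm A}F\ge d$ straight from the covering definition via a pigeonhole count on the $k^d$ well-separated points of $E$. This is shorter and more elementary for the Corollary taken in isolation; the paper's detour, of course, buys the full list of equivalences in Theorem \ref{main2}, not merely $1.\Leftrightarrow 3.$ Your bookkeeping is sound: the crucial point that $c_1$ (the minimal inter-point gap in a patch, up to the factor $\delta$) is independent of $k$ holds because the $e_i$ form a basis, so norm-equivalence gives a uniform lower bound on $\|\sum_i m_i e_i\|$ over nonzero integer vectors $m$.
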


Up until now, we have been working with a fixed Banach space with a fixed basis.  It is straightforward to show, however, that this is not necessary in Theorem \ref{main2}.

\begin{cor}
Let $F$ be a non-empty subset of a $d$-dimensional real vector space $X$.  If the statements in Theorem \ref{main2} hold for $F$ for a particular choice of basis and norm, then the statements hold for $F$ simultaneously for \emph{any} choice of basis and norm.
\end{cor}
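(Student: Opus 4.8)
The plan is to funnel everything through statement~(3) of Theorem~\ref{main2}, which is the only one of the six conditions that refers neither to a basis nor---once the right facts are in place---to a particular norm.

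First I would recall that on a finite-dimensional real vector space any two norms are equivalent: if $\|\cdot\|$ and $\|\cdot\|'$ are norms on $X$ then there exist constants $0<c\le C<\infty$ with $c\|x\|\le\|x\|'\le C\|x\|$ for all $x\in X$, so the identity map $\mathrm{id}\colon (X,\|\cdot\|)\to(X,\|\cdot\|')$ is bi-Lipschitz. Next I would invoke the standard fact that the Assouad dimension is a bi-Lipschitz invariant of metric spaces, see \cite{Robinson, Fraser, Luukkainen}. Combining these two observations, $\dim_\mathrm{A}F$ does not depend on which norm is used to metrise $X$; since it makes no reference to a basis either, the truth value of the statement ``$\dim_\mathrm{A}F=d$'' is an intrinsic property of $F\subseteq X$, independent of all the choices in play.

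Given this, the corollary follows quickly. Suppose the statements of Theorem~\ref{main2} hold for $F$ for some basis $\mathbf{e}$ and norm $\|\cdot\|$; in particular statement~(3) holds, so $\dim_\mathrm{A}F=d$ for that norm and hence, by the previous paragraph, for every norm. Now let $\mathbf{e}'$ be any basis and $\|\cdot\|'$ any norm on $X$. Then $(X,\|\cdot\|')$ is again a $d$-dimensional real Banach space with basis $\mathbf{e}'$, so Theorem~\ref{main2} applies verbatim to this data; since statement~(3) holds in this setting, all six equivalent statements hold for $F$ with respect to $\mathbf{e}'$ and $\|\cdot\|'$. As $\mathbf{e}'$ and $\|\cdot\|'$ were arbitrary, this is exactly the assertion.

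I do not expect a genuine obstacle here: the whole content is the well-known bi-Lipschitz invariance of the Assouad dimension together with the equivalence of norms in finite dimensions, after which the equivalences already established in Theorem~\ref{main2} do the rest. The one point deserving a sentence of care is that the objects appearing in statements~(1), (2), (5) and (6)---arithmetic patches relative to $\mathbf{e}'$, Steinhaus-type configurations, similarity maps of $(X,\|\cdot\|')$, and the unit ball of $\|\cdot\|'$---are precisely the objects that Theorem~\ref{main2} was proved to handle for an \emph{arbitrary} finite-dimensional normed space and basis, so nothing needs to be re-proved.
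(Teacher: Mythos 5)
Your proposal is correct and follows essentially the same route as the paper: funnel everything through the Assouad-dimension statement, note that equivalence of norms in finite dimensions together with bi-Lipschitz invariance of Assouad dimension makes this statement basis- and norm-independent, and then use the equivalences of Theorem \ref{main2} for the new basis and norm to transfer this to all six conditions. The paper's argument is a compressed version of yours (it leaves the bi-Lipschitz invariance implicit), so no substantive difference.
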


\begin{proof}
Property $3.$~(having full Assouad dimension) only depends on the choice of norm, but, since every norm on a finite dimensional real vector space is equivalent, this property is clearly independent of basis \emph{and} norm.  Moreover, since for a given choice of basis and norm, 1. is equivalent to the other statements, this independence passes to the other statements too.
\end{proof}

In particular,  the property `asymptotically containing arbitrarily large arithmetic patches' is both base and norm independent.  This is not generally true of the more rigid property `containing arbitrarily large arithmetic patches' provided $d \geq 2$ as the following example shows.  (It \emph{is} clearly independent of base and norm if $d=1$.)  Let $X = \mathbb{R}^2$ and consider the standard basis $\{(0,1), (1,0)\}$.  Build a set $E \subset [0,1]^2$ by, for every $k \in \mathbb{N}$, adding an arithmetic patch (in this case a discrete $k \times k$ grid) such that all points land on dyadic rationals.  By definition $E$ contains arbitrarily large arithmetic patches.   Observe that the `direction set' generated by $E$ is countable, i.e.
\[
\text{Dir}(E) \ = \  \left\{ \frac{x-y}{\|x-y\|} \ : \ x,y \in E \right\} \   \subseteq  \ S^1
\] 
is a countable subset of the circle.  Now consider the basis $\{(0,1), e_2\}$, where $e_2 \in S^1 \setminus \text{Dir}(E) \neq \emptyset$.  By the choice of $e_2$, $E$ cannot contain an  arithmetic patch of size 2 with respect to this new basis.

\subsection{Applications to sets of integers}

In this sections we present two applications of our results to sets of integers.  In particular we prove that prime powers asymptotically contain arbitrarily long arithmetic progressions (Corollary \ref{primesszem}) and we also prove a weak version of the Erd\"os-Tur\'an conjecture on arithmetic progressions (Theorem \ref{erdos}).

First of all, the following formulation of Theorem \ref{main2} for sets of integers follows immediately from Theorem \ref{main2} and could be viewed as an asymptotic or approximate version of Szemer\'edi's Theorem.

\begin{cor} \label{coroint}
Let $F \subseteq  \mathbb{Z}$.  Then the following are equivalent:
\begin{enumerate}
\item $\dim_\mathrm{A} F = 1$,
\item $\dim_\mathrm{A} (1/F) = 1$, where $1/F = \{1/n : n \in F\setminus \{0\} \}$,
\item  for all $k \in \mathbb{N}$ and $\varepsilon>0$, there exists $\delta \in \mathbb{N}$ such that one may form an arithmetic progression of length $k$ and gap length $\delta$ by moving elements in $F$ by less than $\varepsilon \delta$.  
\end{enumerate}
\end{cor}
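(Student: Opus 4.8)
The plan is to derive everything from Theorem \ref{main2} applied to $F$ regarded as a subset of $\mathbb{R} = X$ with $d=1$, together with one classical input: for any $F \subseteq \mathbb{Z}$, one has $\dim_\mathrm{A} F = 1$ if and only if $\dim_\mathrm{A}(1/F) = 1$. This last equivalence gives $1. \Leftrightarrow 2.$ directly, and it should follow from the known Assouad-dimension computations for sets like $\{1/n^p\}$ and their generalizations (cf.\ \cite{garcia, fraseryu}): passing through $x \mapsto 1/x$ is a locally bi-Lipschitz map away from $0$, and Assouad dimension is preserved under such maps on the relevant scales. I would state this as a short lemma or cite it, being slightly careful that $0$ and the accumulation behaviour near $0$ versus near $\infty$ are handled, since $F \subseteq \mathbb{Z}$ means $1/F$ accumulates only at $0$ while $F$ itself is ``spread out at infinity''; the point is that having full Assouad dimension is a statement about arbitrarily fine scales occurring somewhere, and this transfers.

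For $1. \Leftrightarrow 3.$, the first observation is that condition $3.$ is exactly the statement that $F$ asymptotically contains arbitrarily large arithmetic patches in the sense of Definition \ref{AAP}, specialized to $d=1$ and to the integer setting — with the one extra requirement that the gap $\delta$ be a positive \emph{integer}. So the content is: Theorem \ref{main2} gives $\dim_\mathrm{A} F = 1 \iff F$ asymptotically contains arbitrarily large arithmetic progressions with real gap $\delta > 0$ and real displacements $< \varepsilon \delta$, and I must upgrade ``real $\delta$'' to ``integer $\delta$''. This is where essentially all the work lies, and I expect it to be the main (mild) obstacle. The key point is that $F$ consists of integers: if $E \subseteq F$ lies within Hausdorff distance $\varepsilon' \delta$ of an arithmetic progression $P = \{t + \delta x : x = 0, \dots, k-1\}$ with $\varepsilon' $ small, then the $k$ points of $E$ are integers spaced roughly $\delta$ apart. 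Rounding: the gaps between consecutive chosen points of $E$ are integers within $2\varepsilon'\delta$ of $\delta$; one wants them all equal to a common integer $\delta' = \mathrm{round}(\delta)$, which forces $E$ itself to be an honest integer arithmetic progression of gap $\delta'$ provided $\varepsilon' \delta < 1/2$ and $k$ is not too large relative to $\delta$ — and the latter is automatic because making $E$ this close to a $k$-term progression of gap $\delta$ requires $\delta$ large (indeed $\delta \to \infty$ as $\varepsilon \to 0$ for fixed $k$, since $F$ is a discrete set of integers). So I would argue: given $k, \varepsilon$, apply the real version with a much smaller $\varepsilon'' = \varepsilon''(k,\varepsilon)$; the resulting $\delta$ is large; set $\delta' = \lfloor \delta + 1/2 \rfloor \in \mathbb{N}$; check that the integer points of $E$ can be taken to form an exact $k$-term progression of gap $\delta'$ obtained by moving points of $F$ by $< \varepsilon \delta'$.

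Conversely, $3. \Rightarrow 1.$ is the easy direction: condition $3.$ with integer $\delta$ trivially implies the real asymptotic-patch condition of Definition \ref{AAP} (an integer progression is a real progression, and ``moving by $< \varepsilon\delta$'' is a Hausdorff-distance bound), so Theorem \ref{main2} (item $1. \Rightarrow 3.$ of that theorem, i.e.\ $\dim_\mathrm{A} F = d = 1$) applies. I would organise the write-up as: (i) note $3.$ is a reformulation of Definition \ref{AAP} with an integrality constraint; (ii) cite/prove $\dim_\mathrm{A} F = 1 \iff \dim_\mathrm{A}(1/F)=1$ for $F \subseteq \mathbb{Z}$; (iii) do the rounding argument for $1. \Rightarrow 3.$; (iv) observe $3. \Rightarrow 1.$ is immediate. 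The only genuinely non-formal step is (iii), and even there the arithmetic is routine once one records that forcing $\varepsilon' \delta < 1/2$ is harmless since $\delta$ may be taken as large as we like.
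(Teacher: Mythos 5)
Your overall route is the same as the paper's: both equivalences are derived from Theorem~\ref{main2}, plus the fact that $x \mapsto 1/x$ preserves Assouad dimension.  For $1.\Leftrightarrow 2.$ the paper simply invokes \cite[Theorem A.10]{Luukkainen}, that Assouad dimension is invariant under M\"obius transformations; this is the clean way to state it.  Your ``locally bi-Lipschitz away from $0$'' heuristic is pointing at the right fact but is not itself a proof, since $x\mapsto 1/x$ is not bi-Lipschitz near $0$ or near $\infty$, which are exactly the scales at issue when $F\subseteq\mathbb{Z}$; so a citation (or a separate lemma) is really needed here, as you yourself hedge.

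For $1.\Leftrightarrow 3.$ you correctly notice something the paper glosses over: condition $3.$ demands $\delta\in\mathbb{N}$, whereas Theorem~\ref{main2} only delivers a real $\delta>0$, so some rounding is genuinely required.  However, the specific justification you offer for why the rounding error is harmless is wrong.  You claim that, for fixed $k$, necessarily $\delta\to\infty$ as $\varepsilon\to 0$ because $F$ consists of integers.  This is not so: the integrality of $F$ forces $\delta\geq 1/(1+2\varepsilon)$, i.e.\ bounded \emph{below} near $1$, not tending to infinity.  For instance $F=\mathbb{Z}$ satisfies the real asymptotic-patch condition with $\delta=1$ for every $\varepsilon$.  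So your rounding step, which needs $\delta$ large compared with $(k-1)/\varepsilon$ in order to absorb the cumulative error $(k-1)\cdot\tfrac{1}{2}$ into $\varepsilon\delta$, does not follow as written.  The gap is easily repaired.  One option is a dichotomy: apply Theorem~\ref{main2} with $\varepsilon''$ small compared with $\varepsilon$ and with $1/M$ where $M=(k-1)/\varepsilon+2$; if the resulting $\delta''\geq M$, round to the nearest integer and the error estimate goes through; if instead $\delta''<M$, then the consecutive gaps of $E$ are integers lying in an interval of length $2\varepsilon''\delta''<1$, hence are all equal to a single integer, so $E$ is an \emph{exact} integer arithmetic progression and $3.$ holds with zero movement.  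Another option is to first apply Theorem~\ref{main2} with the larger length $k'=N(k-1)+1$ for suitable $N$ and pass to a sub-progression with gap $N\delta''$, which \emph{is} forced to be large.  Either repair is short, but some such step is needed; ``$\delta\to\infty$ automatically'' is not a valid shortcut.  Your direction $3.\Rightarrow 1.$ is fine as stated.
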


\begin{proof}
Equivalence of 1.~and 2.~follows by the simple fact that Assouad dimension is preserved under the M\"obius transformation $x \mapsto 1/x$, see \cite[Theorem A.10]{Luukkainen}.  The equivalence of 1. and 3. follows immediately from Theorem \ref{main2}.
\end{proof}

In particular, $F \subseteq  \mathbb{Z}$ may have full Assouad dimension, but zero upper Banach density (and so Szemer\'edi's Theorem does not apply directly).  Such examples include: the set $\{n^m : n \in \mathbb{N}\}$ for any $m>1$, the primes, the set of prime powers $\{p^m : p \text{ prime}\}$ for any $m \geq 1$, and `large' sets in the sense of Erd\"os-Tur\'an.  These last examples are particularly interesting because it is known that there does not exist arbitrarily long arithmetic progressions inside powers of integers, let alone powers of primes, and the strict  Erd\"os-Turan conjecture is a wide open problem in number theory.  As such we include the details.

We first require the following technical lemma.
\begin{lma}\label{keyprimeslemma}
For any $m \geq 1$, we have
\[
\dim_\mathrm{A} \,  \{1/ p^m : p \text{ \emph{prime}} \} = 1.
\]
\end{lma}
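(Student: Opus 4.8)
Since $F := \{1/p^m : p \text{ prime}\}$ is a subset of $\mathbb{R}$ and the Assouad dimension is monotone, the bound $\dim_\mathrm{A} F \le 1$ comes for free; the whole job is the lower bound $\dim_\mathrm{A} F \ge 1$. The plan is to adapt the familiar counting argument behind the identity $\dim_\mathrm{A}\{1/n^m : n\in\mathbb{N}\}=1$ referred to earlier: the primes occupy a positive proportion of $\mathbb{N}$ up to a logarithm, by the prime number theorem, and this turns out to be ample for the argument to survive. Unwinding the definition of $\dim_\mathrm{A}$, it is enough to check that for every $s<1$ and every $C>0$ there exist $x\in F$, $R>0$ and $r\in(0,R)$ with $N_r\big(B(x,R)\cap F\big) > C(R/r)^s$; once this holds for all $s<1$, no such $s$ can belong to the set defining $\dim_\mathrm{A} F$, and hence $\dim_\mathrm{A} F \ge 1$.

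To build such a configuration I would fix $s$ and $C$, take a large integer $N$ (to be pinned down at the end in terms of $s$, $C$, $m$), and work with the dyadic block of primes $p\in[N,2N]$. Letting $p_*$ be the least prime $\ge N$, I would put $x = 1/p_*^m \in F$ and $R = N^{-m}$; since $x \le N^{-m} = R$, the ball $B(x,R)=[x-R,x+R]$ runs past the origin on the left and so contains the entire block $\{1/p^m : N\le p\le 2N,\ p\text{ prime}\}$, a set of cardinality $M := \pi(2N)-\pi(N)$. By Chebyshev's estimate (the full prime number theorem is not even needed) $M \ge c_1 N/\log N$ for an absolute $c_1>0$ and all large $N$. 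I would then set the fine scale $r = N^{-(m+1/2)}$, so that $r<R$ and $R/r = N^{1/2}$.

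The crux is a lower bound for $N_r(B(x,R)\cap F)$, and by monotonicity of $N_r$ it suffices to bound $N_r$ of the block from below. Here I would push the block through the change of variable $v\mapsto v^{-1/m}$: a covering set of diameter $\le r$ meeting the block lies in an interval of length $r$ inside $[(2N)^{-m},N^{-m}]$ (up to a harmless enlargement at the endpoints), and the primes it can contain then lie in an interval of $p$-values of length at most $C_m\, r N^{m+1} = C_m N^{1/2}$ for a constant $C_m$ depending only on $m$ — this is just the mean value theorem for $v\mapsto v^{-1/m}$ on that range. Hence each covering set meets at most $C_m N^{1/2}+1$ points of the block, so $N_r(B(x,R)\cap F)\ge M/(C_m N^{1/2}+1)\ge c_2 N^{1/2}/\log N$ for some $c_2=c_2(m)>0$. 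Since $C(R/r)^s = C N^{s/2}$ and $(1-s)/2>0$, the inequality $c_2 N^{1/2}/\log N > C N^{s/2}$ holds for all sufficiently large $N$, which is exactly what was required.

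I do not anticipate a genuine obstacle here. The only arithmetic input is the Chebyshev-type lower bound $\pi(2N)-\pi(N)\gg N/\log N$, and the only combinatorial input is the triviality that an interval of length $w$ holds at most $w+1$ integers, hence at most $w+1$ primes — in particular no Hoheisel-type bound on primes in short intervals is needed, because the argument only ever counts primes in dyadic blocks rather than locating them. The delicate part is purely bookkeeping: controlling $\tfrac{d}{dv}v^{-1/m}$ uniformly over $[(2N)^{-m},N^{-m}]$ (which is where the $m$-dependent constant $C_m$ enters), and then confirming that the finitely many inequalities $r<R$ and $c_2 N^{1/2}/\log N > C N^{s/2}$ can all be met by choosing $N=N(s,C,m)$ large enough.
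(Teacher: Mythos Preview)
Your argument is correct, and it takes a genuinely different and more elementary route than the paper. The paper invokes the Baker--Harman--Pintz short-interval result to extract primes $p_k$ with $k^5 \le p_k \le k^5 + Ck^{21/8}$, verifies by direct calculation that $\{1/p_k\}$ has eventually decreasing gaps, and then appeals to the Garc\'ia--Hare--Mendivil dichotomy for such sequences to conclude that $\dim_\mathrm{A}\{1/p_k\}=1$; the extension to $m$th powers is then immediate because the decreasing-gaps property and polynomial lower bound are inherited by $\{1/p_k^m\}$. Your approach sidesteps both of these external inputs: you only use Chebyshev's lower bound $\pi(2N)-\pi(N)\gg N/\log N$ and a mean-value-theorem pigeonhole to show directly that the covering ratio at scales $R=N^{-m}$, $r=N^{-(m+1/2)}$ beats $(R/r)^s$ for every $s<1$. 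The trade-off is that the paper's detour actually proves a structurally stronger fact (an explicit polynomially growing subsequence of primes whose reciprocals have decreasing gaps), which it flags as of independent interest, whereas your argument is tailored to the dimension statement alone. It is also worth noting that the paper itself observes, at the end of its proof of the ``large sets'' lemma, that the direct counting method there gives $\dim_\mathrm{A}\{1/p\}=1$ but ``does not say anything directly about sets of prime powers''; your argument shows that a small refinement of the counting method \emph{does} handle prime powers, because the $1/\log N$ loss from Chebyshev is dominated by any positive power of $N$.
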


We will prove Lemma \ref{keyprimeslemma} in Section \ref{primeslemmaproof}.  We actually prove a stronger result, which could be interesting in its own right.  Specifically, we show that there is an infinite subset of the primes  which grows polynomially and whose reciprocals form a sequence with decreasing gaps. This will use recent work on gaps in the primes by Baker, Harman and Pintz \cite{baker}.

\begin{cor}\label{primesszem}
Let $m \geq 1$ and consider the set of $m$th powers of primes $\mathbb{P}^m = \{ p^m : p \text{ prime} \}$.  For all $k \in \mathbb{N}$ and $\varepsilon>0$, there exists $\delta >0$ such that one may form an arithmetic progression of length $k$ and gap length $\delta$ by moving elements in $\mathbb{P}^m$ by less than $\varepsilon \delta$.  
\end{cor}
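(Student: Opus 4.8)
The plan is to obtain Corollary \ref{primesszem} as an immediate consequence of Corollary \ref{coroint} and Lemma \ref{keyprimeslemma}. Since $\mathbb{P}^m = \{p^m : p \text{ prime}\} \subseteq \mathbb{Z}$, Corollary \ref{coroint} applies with $F = \mathbb{P}^m$, and its reciprocal set is $1/F = \{1/p^m : p \text{ prime}\}$, which by Lemma \ref{keyprimeslemma} satisfies $\dim_\mathrm{A}(1/F) = 1$. This is exactly condition $2.$ of Corollary \ref{coroint}, so condition $3.$ holds as well for $F = \mathbb{P}^m$: for every $k \in \mathbb{N}$ and $\varepsilon > 0$ there is some $\delta \in \mathbb{N}$ such that one may form an arithmetic progression of length $k$ and gap length $\delta$ by moving elements of $\mathbb{P}^m$ by less than $\varepsilon \delta$. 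As $\delta \in \mathbb{N}$ is in particular a positive real, this is precisely the assertion of Corollary \ref{primesszem}, and nothing further is required.

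Consequently the entire difficulty is shifted onto Lemma \ref{keyprimeslemma}, which is the one genuine obstacle; the rest is just a repackaging of Theorem \ref{main2}. To prove the lemma I would follow the template behind the known identity $\dim_\mathrm{A}\{1/n^p : n \in \mathbb{N}\} = 1$. The bound $\dim_\mathrm{A} \leq 1$ is automatic for subsets of $\mathbb{R}$, so only the lower bound is at issue, and for that it suffices to exhibit, at a sequence of scales tending to $0$, a ball $B(x,R)$ centred in $F = \{1/p^m : p \text{ prime}\}$ together with $r \in (0,R)$ for which $N_r(B(x,R) \cap F) \gtrsim R/r$ while $R/r \to \infty$; by the definition of Assouad dimension this forces $\dim_\mathrm{A} F \geq 1$. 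Concretely, fix a prime $p_0$ and let $g$ be the largest gap between consecutive primes in $[p_0, 2p_0]$. Then the points $1/p^m$ with $p \in [p_0, 2p_0]$ prime form a $\lambda$-dense subset of an interval of length $\asymp p_0^{-m}$ with $\lambda \asymp m g\, p_0^{-(m+1)}$, so with $x = 1/p_0^m$, $R \asymp p_0^{-m}$ and $r \asymp m g\, p_0^{-(m+1)}$ we get $N_r(B(x,R) \cap F) \gtrsim R/r \asymp p_0/(mg)$. Since the prime number theorem already gives $g = o(p_0)$, we have $R/r \to \infty$ and $r \to 0$ as $p_0 \to \infty$, which completes the argument.

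The above argument only uses the prime number theorem, but I would go further and extract the sharper structural statement advertised in the text: using the bound $p_{n+1} - p_n \ll p_n^{0.525}$ of Baker, Harman and Pintz \cite{baker}, one can select an infinite subsequence of primes that grows polynomially and whose reciprocals form a sequence with monotonically decreasing gaps. This gives a clean, self-contained witness to $\dim_\mathrm{A}\{1/p^m : p \text{ prime}\} = 1$ that may be of independent interest; organising the block-by-block selection so that the resulting gap sequence is genuinely monotone, rather than merely small, is the only slightly delicate point.
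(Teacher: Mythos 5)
Your first paragraph reproduces the paper's proof of Corollary \ref{primesszem} exactly: the result is deduced in one line from Corollary \ref{coroint} and Lemma \ref{keyprimeslemma}, with the only observation needed being that the $\delta \in \mathbb{N}$ supplied by Corollary \ref{coroint} is in particular a positive real. That part is correct and complete, and it is precisely what the paper does.

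Your remaining paragraphs sketch a proof of Lemma \ref{keyprimeslemma} itself, which is not logically required here (the paper proves that lemma separately, in Section \ref{primeslemmaproof}), but the sketch is worth comparing. The paper's route is to use the Baker--Harman--Pintz bound to select a polynomially growing subsequence of primes $p_k$ with $k^5 \leq p_k \leq k^5 + Ck^{21/8}$, check directly that the reciprocals $1/p_k^m$ have eventually decreasing gaps, and then invoke the Garc\'ia--Hare--Mendivil dichotomy for decreasing-gap sequences with subexponential decay. Your alternative bounds the covering number $N_r(B(x,R) \cap \{1/p^m : p \text{ prime}\})$ from below directly: taking $x = 1/p_0^m$, $R \asymp p_0^{-m}$ and $r \asymp m\,g\,p_0^{-(m+1)}$ with $g$ the maximal prime gap in $[p_0, 2p_0]$, the reciprocals $1/p^m$ for $p \in [p_0, 2p_0]$ have consecutive differences $\lesssim m\,g\,p_0^{-(m+1)}$ while spanning an interval of length $\asymp p_0^{-m}$, so $N_r \gtrsim R/r \asymp p_0/(mg)$, and $g = o(p_0)$ (a consequence of the prime number theorem alone) makes $R/r \to \infty$, which rules out any Assouad exponent $s<1$. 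This is correct, is self-contained, uses strictly weaker input than the paper (PNT rather than Baker--Harman--Pintz), and bypasses the decreasing-gaps machinery; the paper's heavier approach is there because the authors wanted the stronger structural byproduct, namely a polynomially growing subsequence of primes whose reciprocals have monotonically decreasing gaps, which your direct covering argument does not produce but which your final paragraph correctly identifies how to recover.
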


This result follows immediately from Corollary \ref{coroint} and Lemma \ref{keyprimeslemma}.  One may also obtain higher dimensional analogues of Theorem \ref{primesszem} for sets such as $\mathbb{P}^2 \times \mathbb{P}^2 \subseteq \mathbb{Z}^2 $ or even
\[
\prod_{i=1}^d \mathbb{P}^{m_i} \subseteq \mathbb{R}^d
\]
for any set of reals $m_i \geq 1$, but we leave the precise formulations to the reader.

The Erd\"os-Tur\'an conjecture on arithmetic progressions is a famous open problem in number theory dating back to 1936 \cite{erdos}.  It states that if $F=\{a_n\}_{n \in \mathbb{N}} \subseteq \mathbb{N}$ where $a_n$ is a strictly increasing sequence of positive  integers such that
\begin{equation} \label{largesets}
\sum_{n=1}^\infty 1/a_n \ = \ \infty,
\end{equation}
then $F$ should contain arbitrarily long arithmetic progressions.  Sets of integers which satisfy (\ref{largesets}) are called \emph{large}.  We again begin with a technical lemma concerning Assouad dimension.
\begin{lma}\label{keylargelemma}
If $F \subseteq \mathbb{N}$ is large, then $\dim_\mathrm{A} \, \{ 1/x : x \in F \} = 1$.
\end{lma}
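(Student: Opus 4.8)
The plan is to show that the reciprocals of a large set $F$ are, at infinitely many scales, dense enough to force the Assouad dimension up to $1$. Write $F = \{a_n\}$ with $a_n$ strictly increasing and $\sum 1/a_n = \infty$. The set $\{1/x : x \in F\}$ accumulates only at $0$, so its Assouad dimension is governed entirely by how tightly the points cluster near $0$; concretely, $\dim_\mathrm{A}\{1/x : x \in F\} = 1$ if and only if for every $s < 1$ one can find a window $[1/b, 2/b]$ (for $b$ arbitrarily large) inside which the reciprocals form a $(\rho/b)$-separated set that nonetheless needs more than $C(1/\rho)^s$ balls of radius $\rho/b$ to cover, for $\rho$ arbitrarily small. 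In other words, I want to find, at infinitely many scales, a large ball $B(0,R)$ whose intersection with $\{1/x\}$ cannot be covered efficiently.

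First I would translate largeness into a counting statement: if $\sum 1/a_n = \infty$, then the partial sums $\sum_{a_n \le N} 1/a_n$ grow without bound as $N \to \infty$. The key observation is that the contribution to this sum coming from $a_n$ in a single dyadic block $[2^{j}, 2^{j+1})$ is at most $\#\{n : a_n \in [2^{j}, 2^{j+1})\} \cdot 2^{-j}$. Since the total sum diverges, for every $M$ there must exist a dyadic block $[2^{j}, 2^{j+1})$ containing at least $2^{j}/(M \cdot j^2)$ (say) elements of $F$ — otherwise, comparing with the convergent series $\sum_j 1/j^2$, the whole sum $\sum 1/a_n$ would converge. So $F$ is, along a sequence of scales $j \to \infty$, "almost as dense as it can be" inside $[2^j, 2^{j+1})$: it contains at least $2^j / (M j^2)$ of the $2^j$ available integers there.

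Next I would push this density statement through the map $x \mapsto 1/x$. If $F$ contains $\ge 2^j/(Mj^2)$ integers in $[2^j, 2^{j+1})$, then $\{1/x : x \in F\}$ contains $\ge 2^j/(Mj^2)$ points in the interval $[2^{-(j+1)}, 2^{-j}]$, which has length $2^{-(j+1)}$, and these points are each separated by at least $\sim 2^{-2j}$ from their neighbours. Covering a $2^{-(j+1)}$-length interval containing $2^j/(Mj^2)$ points that are $\gtrsim 2^{-2j}$-separated by balls of radius $r = 2^{-2j}$ requires at least $2^j/(Mj^2)$ of them, while $R/r = 2^{-(j+1)}/2^{-2j} = 2^{j-1}$. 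So $N_r(B(x,R) \cap \{1/x\}) \gtrsim 2^j/(Mj^2)$ against $(R/r)^s = 2^{(j-1)s}$; taking $j \to \infty$ shows that no $s < 1$ can work once $M$ is fixed, hence $\dim_\mathrm{A} \ge 1$, and the reverse inequality is automatic since the set sits in $\mathbb{R}$.

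The main obstacle is making the pigeonhole step quantitatively correct: I need the right auxiliary convergent series (the $1/j^2$ above is one choice, but one must check the arithmetic so that divergence of $\sum 1/a_n$ genuinely forces the dyadic block to be dense along a subsequence of $j$'s, uniformly in the target parameter $M$), and I must be careful that "density $2^j/(Mj^2)$ inside a block of size $2^j$" really does defeat the covering bound $(R/r)^s$ for all $s<1$ — i.e. that $2^j/(Mj^2)$ grows faster than $2^{(j-1)s}$ as $j\to\infty$, which it does since $2^{j(1-s)}$ beats any polynomial in $j$. Everything else is bookkeeping about the metric distortion of $x \mapsto 1/x$ on $[2^j, 2^{j+1})$, which is comparable to multiplication by $2^{-2j}$ up to a bounded factor, so separations and interval lengths scale predictably; alternatively one could invoke \cite[Theorem A.10]{Luukkainen} to pass the dimension computation directly through the Möbius map once the density of $F$ itself near infinity is established.
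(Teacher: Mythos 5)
Your argument is correct and is essentially the contrapositive of the paper's proof: both rest on the same dyadic decomposition $F_j := F \cap [2^j, 2^{j+1})$, the same separation estimate $1/a_n - 1/a_{n+1} \geq 4^{-(j+1)}$ for $a_n, a_{n+1} \in F_j$, and the same double use of $|F_j|$ to lower-bound the covering number of $B(0,2^{-j}) \cap (1/F)$ at scale $\sim 4^{-j}$ while upper-bounding the contribution of $F_j$ to $\sum 1/a_n$ by $|F_j| 2^{-j}$. The paper runs the implication in reverse (assume $\dim_\mathrm{A}(1/F) \leq s < 1$, deduce $|F_j| \leq C\,2^{s(j+2)}$, sum to contradict largeness), which sidesteps your auxiliary $1/j^2$ series and the quantifier issue you flag; the small cleanup your version needs is to replace ``for every $M$ some block is dense'' by the statement that $|F_j| \geq 2^j/j^2$ for infinitely many $j$, which holds because otherwise the tail of $\sum 1/a_n$ would be dominated by $\sum 1/j^2 < \infty$.
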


We will prove Lemma \ref{keylargelemma} in Section \ref{largeproof}.  As an immediate consequence of  Corollary \ref{coroint} and Lemma \ref{keylargelemma} we obtain the following weak solution to the Erd\"os-Tur\'an conjecture:

\begin{thm}\label{erdos}
If $F \subseteq \mathbb{N}$ is large, then  for all $k \in \mathbb{N}$ and $\varepsilon>0$, there exists $\delta >0$ such that one may form an arithmetic progression of length $k$ and gap length $\delta$ by moving elements in $F$ by less than $\varepsilon \delta$.  
\end{thm}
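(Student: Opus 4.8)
The plan is to obtain Theorem \ref{erdos} as a direct corollary of the two statements immediately preceding it. Let $F \subseteq \mathbb{N}$ be large. By Lemma \ref{keylargelemma} we have $\dim_\mathrm{A}(1/F) = 1$, where $1/F = \{1/x : x \in F\}$. Now I would invoke Corollary \ref{coroint}: its conditions 2 and 3 are equivalent, condition 2 for $F$ is exactly $\dim_\mathrm{A}(1/F) = 1$, and condition 3 is exactly the conclusion we want (indeed slightly more, since it produces an \emph{integer} gap length $\delta$). So the whole deduction at this level is formal; the genuine content sits in Lemma \ref{keylargelemma} and, behind Corollary \ref{coroint}, in the implication $3.\Rightarrow 6.$ of Theorem \ref{main2}.

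Accordingly, the step I would actually have to work at is Lemma \ref{keylargelemma}, and I would prove its contrapositive: if $\dim_\mathrm{A}(1/F) < 1$ then $\sum_{x \in F} 1/x < \infty$. Fix $t$ with $\dim_\mathrm{A}(1/F) < t < 1$; then there is a constant $C$ with $N_r(B(y,R) \cap (1/F)) \le C(R/r)^t$ for all $y \in \mathbb{R}$ and all $0 < r < R$. Split the sum over dyadic blocks: $\sum_{x \in F} 1/x = \sum_{j \ge 0} \sum_{x \in F,\, 2^j \le x < 2^{j+1}} 1/x$. The reciprocals of the integers lying in the $j$-th block all lie in $(2^{-j-1}, 2^{-j}]$, hence in some ball $B(y_j, R_j)$ with $R_j \asymp 2^{-j}$; moreover any two distinct such reciprocals $1/x, 1/x'$ satisfy $|1/x - 1/x'| = |x-x'|/(xx') \ge 1/(xx') \gtrsim 4^{-j}$ (here integrality is essential). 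Choosing $r_j \asymp 4^{-j}$ small enough that every set of diameter $\le r_j$ meets at most one of these points, we get
\[
\big|F \cap [2^j, 2^{j+1})\big| \ \le\ N_{r_j}\!\big(B(y_j,R_j) \cap (1/F)\big) \ \le\ C\Big(\frac{R_j}{r_j}\Big)^t \ \asymp\ C\, 2^{jt}.
\]
Hence $2^{-j}\big|F \cap [2^j, 2^{j+1})\big| \lesssim 2^{j(t-1)}$, and summing the resulting geometric series (using $t < 1$) shows $\sum_{x \in F} 1/x < \infty$, so $F$ is not large. This proves Lemma \ref{keylargelemma}, and hence Theorem \ref{erdos}.

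The point where I would expect to have to be careful is the interplay, inside a single dyadic block, between the width of the window containing the reciprocals ($\asymp 2^{-j}$) and their minimal separation ($\asymp 4^{-j}$): these two scales differ by the factor $2^{j}$, so the relevant ratio $R_j/r_j$ is only $\asymp 2^{j}$ rather than $\asymp 4^{j}$, and an Assouad exponent $t < 1$ then caps the number of block points at $\asymp 2^{jt} = o(2^j)$ — which is precisely what forces the reciprocal sum to converge. Everything else (the passage through Corollary \ref{coroint} and the reduction of Theorem \ref{erdos} to Lemma \ref{keylargelemma}) is routine.
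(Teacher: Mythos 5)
Your proposal is correct and takes essentially the same route as the paper: Theorem \ref{erdos} is deduced formally from Lemma \ref{keylargelemma} together with Corollary \ref{coroint}, and Lemma \ref{keylargelemma} is proved by contraposition with the same dyadic-block decomposition, the same separation estimate $|1/x-1/x'|\gtrsim 4^{-j}$ for $x,x'\in F\cap[2^j,2^{j+1})$, the same covering bound $|F\cap[2^j,2^{j+1})|\lesssim 2^{jt}$, and the same geometric-series summation using $t<1$. The careful point you flag (the ratio $R_j/r_j$ being $\asymp 2^{j}$, not $\asymp 4^j$) is indeed exactly the ratio the paper uses ($2^{-k}/4^{-(k+1)}=2^{k+2}$).
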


\section{Remaining proofs}

\subsection{Full dimension guarantees the unit ball is a weak tangent}  \label{proof1}

In this section we will prove  that $3. \Rightarrow 6.$ in the statement of Theorem \ref{main2}.  Let $X$ be a real Banach space with basis $\{e_1, \dots, e_d\}$.  For a given $R>0$, let $\mathcal{Q}(R)$ be the natural tiling of $X$ consisting of the basic set
\[
\left\{   R  \sum_{i=1}^d  x_i e_i  \ : \ x_1  \in [0,1], \dots, x_d  \in [0,1]  \right\}
\]
and translations thereof by elements of the subgroup $\langle R e_i : i=1, \dots, d \rangle$ of $(X,+)$. By \emph{tiling} we mean that the union over all $Q \in \mathcal{Q}(R)$ is the whole of $X$ and distinct members of $\mathcal{Q}(R)$ intersect in a set of measure zero (a face of dimension $\leq d-1$).   For $r>0$ and a given  $Q \in \mathcal{Q}(R)$, let
\[
M_r(Q) \ = \ \# \left\{ Q' \in \mathcal{Q}(r) : Q' \subseteq Q \text{ and } Q' \cap F \neq \emptyset  \right\}.
\]
If we replace the term $N_r(B(x,R) \cap F)$ in the definition of Assouad dimension with $M_r(Q)$ and then take supremum over $Q \in \mathcal{Q}(R)$, rather than $x \in F$, then it is easily seen that one obtains an equivalent definition.  This is essentially because basic sets $Q \in \mathcal{Q}(R)$ are both contained in a ball of radius comparable to $R$ and contain a ball of radius comparable to $R$.  It is also sufficient to only consider scales $R$ and $r$ which are dyadic rationals.  This will help to simplify our subsequent calculations because the grids formed by the dyadic rationals fit perfectly inside each other.

Let $F \subseteq X$ be such that $\dim_\mathrm{A}F = d = \dim X$.  It follows that for all $n \in \mathbb{N}$ there exists dyadic rationals $r_n, R_n$ satisfying $0<r_n<R_n$ and $\delta(n): =r_n/R_n \to 0$ as $n \to \infty$ and $Q_n \in \mathcal{Q}(R_n)$ such that
\begin{equation} \label{key1}
M_{r_n}(Q_n) \geq \delta(n)^{-(d-1/n)}.
\end{equation}
In order to reach a contradiction, assume that $B(0,1)$ is \emph{not} a weak tangent to $F$. This means that there exists $\varepsilon'>0$ such that for all balls $B$ we have $d_\mathcal{H}(F \cap B , B)>\varepsilon' |B|$, where $|B|$ denotes the diameter of $B$.   This means that we can find a dyadic rational $\varepsilon>0$  such that for all dyadic rationals $R>0$ and all $Q \in \mathcal{Q}(R)$, there exists a $Q' \in \mathcal{Q}(\varepsilon R)$ such that $Q' \subset Q$ and $F \cap  Q' = \emptyset$.  In particular, for all $Q \in \mathcal{Q}(R)$ and all dyadic rationals $r < \varepsilon R$,  we can guarantee that
\begin{eqnarray*}
M_r(Q) &\leq&   \# \{ Q'' \in \mathcal{Q}(r) : Q'' \subseteq Q \}  \ - \   \# \{ Q'' \in \mathcal{Q}(r) : Q'' \subseteq Q' \}   \\ \\
&=& \left(\frac{R}{r}\right)^d -  \left(\frac{\varepsilon R}{r}\right)^d.
\end{eqnarray*}
Consider $Q_n\in \mathcal{Q}(R_n)$ above and note that by the previous statement we know that 
\[
M_{r_n}(Q_n)  \ \leq \  \left(\frac{R_n}{r_n}\right)^d -  \left(\frac{\varepsilon R_n}{r_n}\right)^d  \ = \ \delta(n)^{-d} \left(1 - \varepsilon^d \right)
\]
provided $r_n < \varepsilon R_n$.  This does not contradict (\ref{key1}) alone, which is why we now need to cut out more basic tiling sets of increasingly smaller size.  Consider the $(\varepsilon^{-d} - 1)$ tiling sets from $\mathcal{Q}(\varepsilon R_n)$ which we did not cut out from $Q_n$.  Within each of these, we may cut out one tiling set from $\mathcal{Q}(\varepsilon^2 R_n)$ and, provided $r_n < \varepsilon^2 R_n$, this provides the following improved estimate for $M_{r_n}(Q_n)$:
\begin{eqnarray*}
M_{r_n}(Q_n)  &\leq&  \delta(n)^{-d} \left(1 - \varepsilon^d \right) - (\varepsilon^{-d} - 1) \left(\frac{\varepsilon^2 R_n}{r_n}\right)^d \\ \\
&=&  \delta(n)^{-d} \left(1 - \varepsilon^d -\varepsilon^{2d}(\varepsilon^{-d}-1)\right) .
\end{eqnarray*}

We can continue this process of `cutting and reducing' as long as $r_n <\varepsilon^k R_n$.  Therefore, if we choose $m = m(n) \in \mathbb{N}$ such that $\varepsilon^{m+1} R_n \leq  r_n < \varepsilon^m R_n$, then we finally obtain
\begin{eqnarray*}
M_{r_n}(Q_n)  &\leq& \delta(n)^{-d} \left( 1 -  \sum_{k=1}^m \varepsilon^{kd}(\varepsilon^{-d}-1)^{k-1} \right) \\ \\
&=& \delta(n)^{-d} \left( 1 - \frac{1}{\varepsilon^{-d}-1}\sum_{k=1}^m (1-\varepsilon^{d})^{k} \right) \\ \\
&=& \delta(n)^{-d} \left( 1-\varepsilon^{d} \right)^{m} \\ \\
&\leq& \delta(n)^{-d} \left( 1-\varepsilon^{d} \right)^{\log \delta(n) / \log \varepsilon -1} \\ \\
&=&  \frac{\delta(n)^{-d}\delta(n)^{\log(1-\varepsilon^d)/\log\varepsilon} }{ 1-\varepsilon^{d} }
\end{eqnarray*}
Combining this estimate with (\ref{key1})  yields that for all $n$ we have
\[
\delta(n)^{1/n} \ \leq \   \frac{\delta(n)^{\log(1-\varepsilon^d)/\log\varepsilon} }{ 1-\varepsilon^{d} }.
\]
 This is a contradiction since $\delta(n) \to 0$ as $n \to \infty$.

\subsection{The unit ball being a weak tangent implies the asymptotic \\ Steinhaus property} \label{proof2}

In this section we will prove that  $6. \Rightarrow 2.$  in the statement of Theorem \ref{main2}.  Fix a finite set $P \subseteq X$ with  at least 2 points (otherwise the result is trivial)  and $\varepsilon>0$.  Suppose that $F \subseteq X$ is such that the unit ball $B(0,1)$ is a weak tangent to $F$.  This means that for any $n \in \mathbb{N}$ we can find a ball $B_n$ such that $d_\mathcal{H}(F \cap B_n , B_n) \leq  |B_n|/n$.  Choose
\[
n >  \frac{| P|}{\varepsilon} 
\]
where $|P|$ is the (necessarily positive and finite) diameter of $P$ and let 
\[
\delta = \frac{| B_n |}{|P| } > 0 .
\]
This choice of $\delta$ guarantees that we can find $t \in X$ such that $t+\delta P \subseteq B_n$. One then observes that
\[
\inf_{E \subseteq F}  d_\mathcal{H} (E,t+\delta P) \ \leq  \  d_\mathcal{H}(F \cap B_n , B_n) \   \leq  \  \frac{|B_n|}{n} \   <  \  \frac{\varepsilon | B_n |}{|P|}  \ = \    \varepsilon \delta
\]
which completes the proof.

\subsection{Arbitrarily large patches asymptotically implies weak tangent with interior} \label{proof3}

In this section we will prove  that  $1. \Rightarrow 5.$   in the statement of Theorem \ref{main2}.  More specifically, we will prove that
\[
C \ = \ \left\{    \frac{\sum_{i=1}^d  x_i e_i}{2\sum_{i=1}^d\| e_i\|}  \ : \ x_1  \in [0,1], \dots, x_d  \in [0,1]  \right\} \   \subseteq  \  B(0,1)
\]
is contained in some weak tangent of $F$.  Since $F$  asymptotically contains arbitrarily large arithmetic patches we know that for all $k \in \mathbb{N}$ ($k \geq 2$), there exists $\delta>0$ and an arithmetic patch $P_k$ of size $k$ and scale $\delta$ and a subset $E_k \subseteq F$ such that
\[
d_\mathcal{H} (E_k,P_k) \leq \delta.
\]
Let $T_k$ be the rotation and reflection free similarity which maps the convex hull of $P_k$ to $C$ and consider the sequence
\[
T_k(F) \cap B(0,1) \in \mathcal{K}( B(0,1) ).
\]
Since $ \left(\mathcal{K}( B(0,1) ), d_\mathcal{H}\right)$ is compact we may extract a convergent subsequence with limit $A \subseteq B(0,1)$.  We claim that $C \subseteq A$, which is sufficient to complete the proof.  Indeed, since $T_k(E_k) \subseteq T_k(F) \cap B(0,1)$ and
\begin{eqnarray*}
d_\mathcal{H} (T_k(E_k), C) & = & \frac{d_\mathcal{H} (E_k, T_k^{-1}(C))}{2 \delta (k-1)\sum_{i=1}^d\| e_i\|} \\ \\
& \leq& \frac{d_\mathcal{H} (E_k, P_k)+d_\mathcal{H} (P_k, T_k^{-1}(C))}{2 \delta (k-1)\sum_{i=1}^d\| e_i\|} \\ \\
& \leq& \frac{\delta+\delta\sum_{i=1}^d\| e_i\|}{2 \delta (k-1)\sum_{i=1}^d\| e_i\|} \\ \\
&=&  \frac{1+\sum_{i=1}^d\| e_i\|}{2  (k-1)\sum_{i=1}^d\| e_i\|} \\ \\
&\to& 0
\end{eqnarray*}
as $k \to \infty$, the desired inclusion follows.

\subsection{The dimension of the primes} \label{primeslemmaproof}

In this section we will prove Lemma \ref{keyprimeslemma}.  Computing the Assouad dimension of decreasing sequences $\{ x_n : x_n \searrow 0 \}$ is an interesting problem which has recently been considered in detail by Garc{\i}a, Hare and Mendivil \cite{garcia}.  The problem is greatly simplified if the sequence has \emph{decreasing gaps}, i.e. $x_n-x_{n+1}$ decreases as $n \to \infty$ (or at least eventually decreases).  In fact in this case there is a dichotomy: either the sequence decays subexponentially and the Assouad dimension is 1; or the sequence decays at least exponentially and the Assouad dimension is 0, see \cite[Proposition 4]{garcia}.   We will show that there is a subset of the primes whose reciprocals both `carry the dimension' and have decreasing gaps. 

The main result in \cite{baker} is that for all $n \geq n_0$ there is at least one prime $p_n$ satisfying
\[
n \leq p_n \leq n+O(n^{21/40})
\]
for some effective constant $n_0$. Relying on this result, we may choose an increasing  sequence of primes $p_k$ satisfying:
\[
k^5 \leq p_k \leq k^5+Ck^{21/8}
\]
for some absolute constant $C>0$. We wish to show that the difference between successive gaps
\[
G(k) \ := \ \left( \frac{1}{p_k} - \frac{1}{p_{k+1}} \right) - \left( \frac{1}{p_{k+1}} - \frac{1}{p_{k+2}} \right) 
\]
is positive for sufficiently large  $k$.  We have
\begin{eqnarray*}
 &\,&  \hspace{-15mm} p_{k} p_{k+1} p_{k+2} G(k) \\ \\
&=&   p_{k+1} p_{k+2} +p_{k} p_{k+1}  - 2 p_{k} p_{k+2} \\ \\
&\geq & (k+1)^5 (k+2)^5+  k^5(k+1)^5  -  2  \left(k^5+Ck^{21/8} \right)\left((k+2)^5+C(k+2)^{21/8} \right) \\ \\
&\geq & 30 k^8 - O(k^{61/8}) \  > \ 0
\end{eqnarray*}
for large $k$.  It follows that $P = \{1/p_k \} $ is a decreasing sequence with eventually decreasing gaps, which makes the Assouad dimension straightforward to calculate.  Indeed, since the elements of $P$ have a polynomial lower bound ($1/p_k \geq c/k^5$ for some constant $c$)  it follows from \cite[Proposition 4]{garcia} that  $\dim_\mathrm{A} P = 1$.  Moreover, it follows that $P^m = \{ p^m : p \in P\} \subseteq [0,1]$ is also a decreasing sequence with eventually  decreasing gaps and the terms have a polynomial lower bound (this time $ \geq c'/k^{5m}$).  We may conclude that for any $m \geq 1$ we have
\[
1 \  \geq \  \dim_\mathrm{A} \,  \{1/ p^m : p \text{ prime} \} \  \geq \  \dim_\mathrm{A} P^m \  =  \ 1
\]
which proves the lemma.

\subsection{The dimension of large sets} \label{largeproof}

Let $F =\{a_n\}_{n \in \mathbb{N}} \subseteq \mathbb{N}$ be large with $a_n$ strictly increasing and let $1/F = \{ 1/x : x \in F\}$.  In order to reach a contradiction, assume that $\dim_\mathrm{A} (1/F) <1$, recalling that $F$ and $1/F$ necessarily have equal Assouad dimensions.  It follows that there exists $s \in (0,1)$ and $C>0$ such that for all $k \in \mathbb{N}$ and $r \in (0, 2^{-k})$ we have
\[
N_{r}\left( B(0,2^{-k})  \cap 1/F \right) \ \leq \ C \left( \frac{2^{-k}}{r} \right)^s.
\]
If $a_n, a_{n+1} \leq 2^{k+1}$ for some $k \in \mathbb{N}$ we have
\[
1/{a_n} -  1/a_{n+1}  \geq 4^{-(k+1)}
\]
and so no open set of diameter $4^{-(k+1)}$ can cover the reciprocals of any two distinct points in $ F_k := F\cap [2^{k}, 2^{k+1}]$.  It follows that
\[
|F_k|   \ \leq \ N_{4^{-(k+1)}}\left( B(0,2^{-k}) \cap 1/F \right) \ \leq \ C \left( \frac{2^{-k}}{4^{-(k+1)}} \right)^s \ = \ C \, 2^{s(k+2)}.
\]
Since $F$ is large, we have
\[
\infty \  = \ \sum_{n=1}^\infty 1/a_n \ \leq \ \sum_{k=1}^\infty \  \sum_{n \, : \,  a_n \in F_k} 1/a_n \ \leq \  \sum_{k=1}^\infty |F_k| 2^{-k} \ \leq \  C2^{2s} \sum_{k=1}^\infty 2^{(s-1)k} \ < \ \infty
\]
since $s<1$, which is the desired contradiction.  Observe that the argument in this section directly shows that the Assouad dimension of the primes is 1, however, it does not say anything directly about sets of prime powers.  To deal with these sets, in the previous section we proved that the primes contained a decreasing sequence with decreasing gaps and a polynomial lower bound, which is stronger than having full Assouad dimension.

\vspace{6mm}

\begin{centering}

\textbf{Acknowledgements}

The first named author is  supported by a \emph{Leverhulme Trust Research Fellowship} (RF-2016-500) and the second named author is supported by a PhD scholarship provided by the School of Mathematics in the University of St Andrews.  The authors thank Kenneth Falconer for suggesting we include analogues of the Steinhaus property.
\end{centering}

\begin{multicols}{2}{

\noindent \emph{Jonathan M. Fraser\\
School of Mathematics and Statistics\\
The University of St Andrews\\
St Andrews, KY16 9SS, Scotland} \\

\noindent  Email: jmf32@st-andrews.ac.uk\\ \\

\noindent \emph{Han Yu\\
School of Mathematics and Statistics\\
The University of St Andrews\\
St Andrews, KY16 9SS, Scotland} \\

\noindent  Email: hy25@st-andrews.ac.uk\\ \\
}

\end{multicols}

\end{document}